\newtheorem{thm}{Theorem}[section]
\theoremstyle{definition}
\newtheorem{ex}[thm]{Example}
\newtheorem{lem}[thm]{Lemma}
\theoremstyle{definition}
\numberwithin{equation}{section}
\def\H{\mathbb{H}} 
\def\P{\mathbb{P}} 
\def\N{\mathbb{N}} 
\def\Z{\mathbb{Z}} 
\def\C{\mathbb{C}} 
\def\G{\mathcal{G}} 
\def\F{\mathcal{F}^{d}}
\def\SF{Sing(\F)} 
\def\om{GRevLex}
\newenvironment{breakablealgorithm}
  {
   \begin{center}
     \refstepcounter{algorithm}
     \hrule height.8pt depth0pt \kern2pt
     \renewcommand{\caption}[2][\relax]{
       {\raggedright\textbf{\ALG@name~\thealgorithm} ##2\par}%
       \ifx\relax##1\relax 
         \addcontentsline{loa}{algorithm}{\protect\numberline{\thealgorithm}##2}%
       \else 
         \addcontentsline{loa}{algorithm}{\protect\numberline{\thealgorithm}##1}%
       \fi
       \kern2pt\hrule\kern2pt
     }
  }{
     \kern2pt\hrule\relax
   \end{center}
  }
\begin{document}     


\title[Euler-Betti Algorithm]
{The Euler-Betti Algorithm to identify foliations in the Hilbert scheme} 

\author[Pantale\'on-Mondrag\'on]{P.~Rub\'i Pantale\'on-Mondrag\'on}
\address{P.~Rub\'i Pantale\'on-Mondrag\'on\\
         Centro de Investigaci\'on en Matem\'aticas, A.C.\\
         Jalisco S/N, Col. Valenciana\\
         36023, Guanajuato, Gto.  \\
         M\'exico}
\email{pantaleon.rubi@gmail.com}

\author[Mart\'in del Campo]{Abraham Mart\'in del Campo}
\address{Abraham Mart\'in del Campo\\
         Centro de Investigaci\'on en Matem\'aticas, A.C.\\
         Jalisco S/N, Col. Valenciana\\
         36023, Guanajuato, Gto.  \\
         M\'exico}
\email{abraham.mc@cimat.mx}
\urladdr{http://personal.cimat.mx:8181/~{}abraham.mc/}

\keywords{ Foliation, Hilbert scheme, singular locus, isolated singularities.}

\begin{abstract}
  Foliations in the complex projective plane are uniquely determined
   by their singular locus, which is in correspondence with a
   zero-dimensional ideal. Ho\-wever, this correspondence is not
   surjective.  We give conditions to determine whether an ideal
   arises as the singular locus of a foliation or not. Furthermore, we
   give an effective method to construct the foliation in the positive
   case.
\end{abstract}

\maketitle

\section{Introduction}\label{S:Intro} 

In a broad sense, a foliation is a decomposition of a manifold into
disjoint equidimensional connected submanifolds, called leaves.
Classifying foliations is still a widely open problem due the
difficulty of finding examples with specific properties. For instance,
foliations without invariant curves are known to play an important
role in the minimal exceptional problem \cite{CLN88}. However,
finding this type of foliations is difficult, even though they form a
dense open set in the space of foliations, as demonstrated by
Jouanolou \cite{J06}.

In the complex projective plane $\P^{2}$, one-dimensional foliations
of degree, at least two with isolated singularities, are uniquely
determined by their singular locus, which is a finite set of points.
This was first proved by G\'omez-Mont and Kempf \cite{GMK89} when the
singular locus consists of different points; and, later by Campillo and
Olivares \cite{CO99, CO01} in full generality. Hence, a foliation of
degree $d$ with isolated singularities is in correspondence with a
vanishing ideal of $d^2{+}d{+}1$ points.  These results translate the
study of these foliations to analyzing their singular locus, which is
an algebraic scheme inside the Hilbert scheme.  Nevertheless, it is
not known the way the space of foliations sets inside the Hilbert
scheme.

We give conditions to determine whether a point in the Hilbert scheme
can be realized as the singular locus of a foliation. We derive an
effective algorithm that constructs the foliation from a
zero-dimensional ideal in order to facilitate potential future
applications or classification of foliations.

Our algorithm relies on standard tools from computational algebraic
geo\-metry, such as the computation of syzygies via Gr\"obner bases;
thus, its complexity is not polynomial on the number of
indeterminates. However, in low dimensions our algorithm provides a
computational tool to find new and interesting examples of foliations.
For instance, an implementation of our algorithm was used by the first
author, to find a new family of foliations of degree three without
invariant curves and a unique singular point \cite{AP19}.  For
illustration, let us consider the ideal
\[
  \langle y+a z^2 + b z^5+ c z^8, z^{13}\rangle, \quad \mbox{ for
  }a,b,c \in \C^{*}.
\]
This ideal is an element of the Hilbert scheme of $13$ points.  If the
coefficients satisfy the quadratic equation $b^2{-}ac=0$; then, the
ideal corresponds to the foliation given by the $1$-form
$Adx + Bdy + Cdz $, with
\begin{align*}
  A &=(-2b/c)xy^{3}+(-2a^{3}/c)x^{2}yz+(-4a^{2}/b)y^{2}z^{2}+(-2a^{4}/c)xz^{3},\\
  B &=(2b/c)x^{2}y^{2}+(1/a)y^{3}z+(3ab/c)xyz^{2}+(a^{3}/b)z^{4}, \\
  C &=(2a^{3}/c)x^{3}y+(-1/a)y^{4}+(ab/c)xy^{2}z+(2a^{4}/c)x^{2}z^{2}+(-a^{3}/b)yz^{3}.
\end{align*}
This is a foliation of degree three, with singular locus consisting of
only one point, as we will see in the Section~\ref{Result}.

The structure of this article is the following. In
Section~\ref{S:background} we give a brief summary of the basic
concepts for which this work was developed. In
Section~\ref{Euler-Betti} we develop a characterization of the
singular scheme of a foliation and the main algorithm. In Section
~\ref{Result}, we apply the algorithm in some examples.

\section{Background}\label{S:background}


A {\bf foliation} $X^d$ on $\P^{2}$ of dimension one and degree
$d\geq 2$ is defined, up to a scalar factor, by a reduced homogeneous
$1$-form \[A(x,y,z)dx+B(x,y,z)dy+C(x,y,z)dz,\] where $A, B$, and
$C\in \C[x,y,z]$ are homogeneous polynomials of degree $d+1$, that
satisfy Euler's condition:
\[
  xA + y B+z C=0.
\]

The vanishing points of the ideal generated by the polynomials $A,B,C$
are the {\bf singular points} of the foliation, so the singular
locus of $X^d$ is defined as the algebraic variety corresponding to
the zero locus of the ideal $J=\langle A,B,C\rangle$, this is
\small{\[ {\bf Sing(X^d)} := \mathcal V(J) = \{ (x:y:z) \in \P^2
    \mid A(x,y,z) = B(x,y,z) =C(x,y,z) = 0\}.
\]}

Foliations and their singular locus can also be defined in terms of
vector fields, as summarized in \cite{A18,GMOB89}. When the
polynomials $A, B, C$ have no common factors, the singular locus
consists of a finite set of points. In general, the dimension and
degree, as well as other important information of $Sing(X^d)$ can be
retrieved from the ideal $J$ through its Hilbert polynomial and its
Betti numbers, whose definitions we will now recall.

Regard $R=\C[x,y,z]$ as a graded ring
$R=\bigoplus_{s\in \N} R_{s}$, and consider the $R$-module $M=R/J$.  In
this setting, a graded free resolution for $M$ is an exact sequence

\begin{equation*}\label{eq:freeResolution}
 \footnotesize{ \SelectTips{cm}{}
  M_{\bullet}: \xymatrix @C1.3pc @R0.9pc {
    0\ar[r]&\bigoplus\limits_{j}
    R(-j)^{\beta_{3,j}}\ar[r]^-{\varphi_{3}}&\bigoplus\limits_{j}
    R(-j)^{\beta_{2,j}}\ar[r]^{\varphi_{2}}&\bigoplus\limits_{j}
    R(-j)^{\beta_{1,j}}\ar[r]^{\varphi_{1}}&\bigoplus\limits_{j}
    R(-j)^{\beta_{0,j}}\ar[r]^-{\varphi_{0}}&M },}
\end{equation*}
where each $\bigoplus R(-j)^{\beta_{i,j}}$ is a graded free module,
each $\varphi_{i}$ is a degree $0$ homomorphism between them.  The
kernel of $\varphi_{0}$ is isomorphic to $J$, and the cokernel of
$\varphi_{1}$ is isomorphic to $M$. The exponents $\beta_{i,j}$ are
called {\bf Betti numbers} and represent the minimal number of
generators of degree $j$ for the $i$th-module
$\bigoplus R(-j)^{\beta_{i,j}}$. The maximal index $i$ where
$\beta_{ij}\neq 0$ is the {\bf length} of the resolution.  Since
$R$ is a polynomial ring in three variables, for any ideal
$J\subseteq R$, the length of the resolution $M_{\bullet}$
is at most $3$, due the Hilbert's syzygy theorem.

When $R/J=\bigoplus_{s\in \N} (R/J)_{s}$ is regarded as a commutative
$\C$-algebra, we can consider its Hilbert function, which is defined
for every $s\in \Z$ as the dimension of the $\C$-vector space
$(R/J)_{s}$.  When $s$ is sufficiently large, the Hilbert function
coincides with a polynomial called the {\bf Hilbert polynomial}.

For an ideal $J$ whose variety consists of $N$ points counted with
multiplicities, its Hilbert polynomial is the constant polynomial $N$.
The set of all saturated homogeneous zero-dimensional ideals in
$R=\C[x,y,z]$ with Hilbert polynomial equals to $N$ is the
{\bf Hilbert scheme of $N$ points in $\P^{2}$}, and we denote it
by $\H^{N}(\P^{2})$. This Hilbert scheme is a smooth, irreducible,
algebraic variety of dimension $2N$ (see~\cite{MS04, ACG11}).

Notice that a finite set of points in $\P^{2}$ can be studied locally,
up to a change of coordinates, through the affine chart
$U_{x}:=\{(1:y:z)\in\P^{2}\}\cong \C^{2}$.  In this local setting, we
use $\H^{N}(\C^{2})$ to denote the Hilbert scheme of $N$ points in
$\C^{2}$, that parameterizes ideals in $\C[y,z]$ whose Hilbert
polynomial is the constant $N$. Furthermore, the foliation $X^d$ can
be defined locally by a $1$-form
\[
  f(y,z)dy+g(y,z)dz,
\] 
for certain polynomials $f,g \in \C[y,z]$ related to the polynomials
$A,B$, and $C$.  With this local representation, we can see that a
point $(1:y:z)\in U_{x}$ lie on $Sing(X^d)$ if and only if $(y,z)$ lie
on the variety $\mathcal V(f,g)$.  Thus, the singular points of the
foliation can be studied by analyzing the variety $\mathcal V(f,g)$.
One of the advantages of this local representation is that the number
of singular points can be computed by the Milnor number which in a
broad sense, counts the multiplicity of the intersection between the
curves defined by $f$ and $g$ at a given point.  Nevertheless, for a
foliation $X^d$ of degree $d$ with isolated singularities, the total
number of singular points equals to $d^{2}+d+1$, as shown by
Jouanolou~\cite{B15}.

Since $\SF$ is finite, the $\C$-vector space
$\C[x,y,z]/\langle A,B,C\rangle$ is also finite, when $A, B, C$ have
no common factors. The degree of $\SF$ is then
$\dim_{\C} \C[x,y,z]/\langle A,B,C\rangle=d^{2}+d+1$.  This is also
true for the variety $\mathcal V(f,g)$, and after a change of
coordinates, we can assume that the equality
$\dim_{\C} \C[y,z]/\langle f,g\rangle=d^{2}+d+1$ holds.  As a
consequence, the Hilbert polynomial of both ideals is the constant
polynomial $d^{2}+d+1$; thus, we can consider the ideal
$J=\langle A,B,C\rangle$ as a point of the Hilbert scheme
$\H^{d^{2}+d+1}(\P^{2})$, or equivalently,
$\langle f,g\rangle \in\H^{d^{2}+d+1}(\C^{2})$.

\section{Results}\label{Euler-Betti}

\subsection{Criterion for foliations}

A foliation of degree $d\geq 2$ with isolated singularities is
uniquely determined by its singular subscheme~\cite{GMK89,CO01}.
Moreover, Campillo and Olivares~\cite{CO01} showed that, for a
foliation $X^{d}$ of degree $d\geq 2$, the ideal defining the singular
scheme $\SF$ must contain (up to scalar multiples) three unique
polynomials $A,B,C$ of degree $d{+}1$ satisfying Euler's
condition. Thus, $X^{d}$ is the only foliation having the algebraic
scheme $\mathcal V(A,B,C)$ as its singular locus.  Recasting these
conditions, we develop an algorithm to determine whether a given ideal
$J$ defines the singular scheme $\SF$ of a foliation, by constructing
the triplet of polynomials $A,B,C$ prescribed by Campillo y Olivares.

We begin by deriving conditions on the elements of a minimal Gr\"obner
basis of the ideal defining the singular scheme of a foliation.  So,
consider an ideal $I\subseteq \C[y,z]$, with
$I\in \H^{d^{2}+d+1}(\C^{2})$ and $d\geq2$, and let $J$ be the
homogenization of $I$ in $\C[x,y,z]$.

\begin{lem}\label{Clau-Rub}
  Let $\G\subseteq\C[y,z]$ be a minimal Gr\"obner basis of the ideal
  $I$ with respect to a graded monomial order $\succ$. If the basis
  $\G$ contains three linearly independent polynomials of degree
  $d{+}1$, then, so does $J$.  Moreover, if $J$ only contains
  polynomials of degree at least $d{+}1$, then, the converse holds.
\end{lem}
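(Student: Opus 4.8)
The plan is to prove the two implications separately, both resting on the standard fact (Cox--Little--O'Shea) that homogenization sends a Gr\"obner basis to a Gr\"obner basis once $\succ$ is graded. I would first fix the induced graded order on $R=\C[x,y,z]$ for which $g\mapsto g^{h}$ preserves leading terms and total degree, so that $\mathrm{LT}(g^{h})=\mathrm{LT}(g)$ and $\deg g^{h}=\deg g$ for every $g\in\C[y,z]$. With this choice $\G^{h}:=\{g^{h}:g\in\G\}$ is again a \emph{minimal} Gr\"obner basis, now of $J=I^{h}$, because the leading terms — and hence the pairwise non-divisibility relations that characterize minimality — are unchanged.

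For the forward implication, suppose $g_{1},g_{2},g_{3}\in\G$ are linearly independent and of degree $d{+}1$. Their homogenizations $g_{1}^{h},g_{2}^{h},g_{3}^{h}$ are homogeneous of degree $d{+}1$ and lie in $J$. To keep independence I would simply dehomogenize: any relation $a_{1}g_{1}^{h}+a_{2}g_{2}^{h}+a_{3}g_{3}^{h}=0$ specializes at $x=1$ to $a_{1}g_{1}+a_{2}g_{2}+a_{3}g_{3}=0$, so independence of the $g_{i}$ forces $a_{1}=a_{2}=a_{3}=0$. This half uses no hypothesis on the minimal degree of $J$.

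For the converse I would argue through Hilbert functions and initial ideals. Because standard monomials form a $\C$-basis of $R/J$ in every degree, $\dim_{\C}J_{s}$ equals the number of degree-$s$ monomials lying in $\mathrm{in}_{\succ}(J)$; in particular the assumption that $J$ has no nonzero element of degree $<d{+}1$ is equivalent to $\mathrm{in}_{\succ}(J)$ having minimal degree $d{+}1$. Since $\G^{h}$ is a minimal Gr\"obner basis, the minimal monomial generators of $\mathrm{in}_{\succ}(J)$ are exactly the leading terms $\mathrm{LT}(g^{h})=\mathrm{LT}(g)$ for $g\in\G$, giving a degree-preserving bijection onto $\G$. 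In the lowest degree $d{+}1$ no monomial of $\mathrm{in}_{\succ}(J)$ can be a proper multiple of a generator, as there are none of smaller degree; hence every degree-$(d{+}1)$ monomial of $\mathrm{in}_{\succ}(J)$ is itself a minimal generator. Chaining these identifications with the converse hypothesis gives $\#\{g\in\G:\deg g=d{+}1\}=\dim_{\C}J_{d+1}\ge 3$, and distinct Gr\"obner basis elements have distinct leading monomials, so the three elements found are linearly independent. The crux — and the place where the hypothesis is indispensable — is precisely this counting step: without the degree restriction a single generator of $\mathrm{in}_{\succ}(J)$ of degree $<d{+}1$ could account for many degree-$(d{+}1)$ monomials, making $\dim_{\C}J_{d+1}$ strictly larger than the number of degree-$(d{+}1)$ basis elements and breaking the converse.
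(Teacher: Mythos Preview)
Your argument is correct, and it is genuinely different from the paper's. Both directions agree on the easy half (homogenize and dehomogenize to transfer linear independence), but for the converse the paper proceeds by cases and $S$-polynomials: it assumes $\G$ has only one, then only two, elements of degree $d{+}1$, produces an $S$-polynomial whose leading term cannot lie in the initial ideal, and derives a contradiction. You instead pass through the initial ideal and count: since $\G^{h}$ is a minimal Gr\"obner basis of $J$ with the same leading terms as $\G$, and since the hypothesis forces every degree-$(d{+}1)$ monomial of $\mathrm{in}_{\succ}(J)$ to be a minimal generator, the number of degree-$(d{+}1)$ elements of $\G$ is exactly $\dim_{\C}J_{d+1}\ge 3$. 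Your route is shorter and more conceptual; the paper's argument is more elementary in that it does not invoke the homogenization-of-Gr\"obner-bases theorem or the Hilbert-function identity $\dim_{\C}J_{s}=\dim_{\C}(\mathrm{in}_{\succ}J)_{s}$, trading those off for explicit manipulations. One small point worth making explicit in your write-up: because $J$ is homogeneous and contains nothing of degree $<d{+}1$, any degree-$(d{+}1)$ element of $J$ is automatically homogeneous, which is what justifies reading the converse hypothesis as $\dim_{\C}J_{d+1}\ge 3$.
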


\begin{proof} 
  The first part follows from the fact that the elements of a minimal
  Gr\"obner basis are linearly independent on $\C$, and their
  homogenization preserves the linear independence.

  On the other hand, for the converse, we assume that $J$ contains at
  least $3$ linearly independent polynomials of degree $d{+}1$; and,
  no polynomials of degree $d$ or less.  Then, the set $\G$ also
  contains no polynomials of degree $d$ or less, as the homogenization
  preserves degrees, and there would be elements in $J$ of degree at
  most $d$ otherwise.
    
  The set $\G$ must contain at least one element of degree $d{+}1$.
  Otherwise, a homogeneous polynomial $F(x,y,z)\in J$ of degree
  $d{+}1$ would dehomogenize as a polynomial $F(1,y,z)\in I$ of degree
  $d{+}1$, at most. If $F(1,y,z)$ had degree less than $d{+}1$; then,
  it could not be generated by the Gr\"obner basis $\G$, because
  $\succ$ is a graded order.
  
  Suppose $\G=\{f_{1},g_{2},\ldots,g_{s}\}$ where $deg(f_{1})=d+1$ and
  $deg(g_{i})\geq d+2$ for all $i=2,\ldots, s$. Let
  $F_{3}(x,y,z)\in J$ be a polynomials of degree $d+1$ which is
  linearly independent with the homogenization $F_{1}\in J$ of
  $f_{1}$. Let's denote by $f_{3}:= F_{3}(1,y,z)$ then
  $deg(f_{3})=d+1$; otherwise, $f_{3}\in I$ could not be generated by
  $\G$. Moreover, since $\succ$ is a graded order, its leading term
  $in_{\succ}(f_{3})=c_{1}\cdot in_{\succ}(f_{1})$ for some
  $c_{1}\in \C^{*}$. Hence, the $S$-polynomial
  $S(f_{1},f_{3}):=f_{1}-\frac{1}{c_{1}}f_{3}\in I$ is not zero
  because $F_{1}$ and $F_{3}$ are linearly independent and then so are
  $f_{1}$ and $f_{3}$.  Moreover, $deg(S(f_{1},f_{3}))\leq d+1$ and
  $in_{\succ}(S(f_{1},f_{3}))\prec in_{\succ}(f_{1})$. Thus,
  $deg(S(f_{1},f_{3}))=d+1$ and
  $in_{\succ}(S(f_{1},f_{3}))\in in_{\succ}(\G-\{f_{1}\})$, which is a
  contradiction, because $\G$ is minima.

  Then suppose $\G=\{f_{1},f_{2},g_{3},\ldots, g_{s}\}$ where
  $deg(f_{1})=deg(f_{2})=d+1$ and $deg(g_{i})\geq d+2$ for all
  $i=3,\ldots, s$. Similar to the previous case, there is
  $F_{3}(x,y,z)\in J$ a lineal independent polynomial of degree $d+1$
  with the homogenization $F_{1}$ and $F_{2}$ of $f_{1}$ and $f_{2}$
  respectively, and such that $deg(F_{3}(1,y,z))=d+1$. Let
  $f_{3}:= F_{3}(1,y,z)\in I-\G$. We assume that the leading term
  $in_{\succ}(f_{3})=c_{1} \cdot in_{\succ}(f_{1})$ for some
  $c_{1}\in\C^{*}$.  We consider the $S$-polynomial
  $S(f_{1},f_{3}):= f_{1}-\frac{1}{c_{1}}f_{3}$, which is a polynomial
  not zero because $f_{1}$ and $f_{3}$ are linearly independent, then
  $deg(S(f_{1},f_{3}))\leq d+1$ and
  $in_{\succ}(f_{1})\succ in_{\succ}(S(f_{1},f_{3}))$. If
  $deg(S(f_{1},f_{3}))=d+1$, as
  $in_{\succ}(f_{1})\succ in_{\succ}(S(f_{1},f_{3}))$ and
  $deg(in_{\succ}(g_{i}))\geq d+2$, then
  $in_{\succ}(S(f_{1},f_{3}))=c_{2}\cdot in_{\succ}(f_{2})$ for some
  $c_{2}\in\C^{*}$.

  Again, we consider the $S$-polynomial
  $S_{1}:=S(f_{2},S(f_{1},f_{3}))=f_{2}-\frac{1}{c_{2}}S(f_{1},f_{3})=f_{2}-
  \frac{1}{c_{2}}f_{1}+\frac{1}{c_{2}c_{1}}f_{3}\in I$. Since
  $f_{1}, f_{2}$ and $f_{3}$ are linearly independent then
  $S_{1}\neq 0$, thus, $in_{\succ}(h)\succ in_{\succ}(S_{1})$ for all
  $h\in\G$, that is, $in_{\succ}(S_{1})$ is not an element in the ideal
  of leading terms of $I$, but this is a contradiction because
  $S_{1}\in I$.
  \end{proof}

  Lemma~\ref{Clau-Rub} gives the first conditions to determine whether
  an ideal in the Hilbert Scheme corresponds to a foliation. However,
  these conditions are not enough as we can see in the following
  example.

\begin{ex}\label{EjemploTeroema}
  We consider the ideal given by
  $I=\langle y^{2}z+z^{3}, yz-y^{3}-z^{2}y\rangle \subset \C[y,z]$.  The
  generators of this ideal are the components of the $1$-form
  $yzdz+(y^{2}+z^{2})(zdy-ydz)$ which represent a foliation on
  $\P^{2}$ of degree $d=2$ \cite{CDGM10}.

  If we consider the minimal Gr\"obner basis
  \[\G_{1}=\{yz^{2}, y^{2}z+z^{3}, y^{3}-yz,z^{4}\}\] with respect to
  the graded reverse lexicographical monomial order $\om$.  The
  homogenization of the polynomials of degree $3$ with respect to $x$
  is the set \[\G=\{yz^{2}, y^{2}z+z^{3}, y^{3}-xyz\}\] and we can
  show that $xf_{1}+yf_{2}+zf_{3}\neq 0$ for all $f_{j}\neq f_{i}$,
  and $f_{i},f_{j}\in \G$.

  But with the homogenizations of the elements of degree $3$ of the
  minimal basis Gr\"obner
  \[\G_{2}=\{yz^{2}, y^{2}z+z^{3}, -y^{3}-xyz-z^{2}y\}\] with respect
  to the same monomial order  we
  have \[x(yz^{2})+y(y^{2}z+z^{3})+z(-y^{3}-xyz-z^{2}y)=0.\]
\end{ex}

As we saw in the example, the problem here is that there is no
uniqueness in the minimal bases. The previous theorem only guarantees
the existence of polynomials of the correct degree that would define
the foliation; however, these could not satisfy Euler's condition.

\subsection{ The Euler's Condition}

Let $J\in \H^{d^{2}+d+1}(\P^{2})$ be an ideal generated by three
homogeneous polynomials
$F_{1}(x,y,z),F_{2}(x,y,z),F_{3}(x,y,z)\in\C[x,y,z]$ of degree $d+1$,
which are linearly independent and without common factors. Moreover,
we assume that this set of generators is minimal in the degrees, that
is, there are no polynomials of degree $\leq d$ in $J$.

Denote again by $R=\C[x,y,z]$. The ideal
$J=\langle F_{1},F_{2},F_{3}\rangle$ as an $R$-module has a minimal
free resolution of length $1$ (see~\cite[Proposition 3.1]{E05}).
Since finitely generated graded modules have only one minimal graded
resolution up to isomorphism, one can derive a resolution for $M=R/J$
from a resolution for $J$, and by the Hilbert-Burch theorem
(see~\cite[Theorem3.2]{E05}), one deduces that the minimal free
resolution of $M$ is represented in the form:

\begin{equation}\label{resolucionM}
  \SelectTips{cm}{}
  M_{\bullet}: \xymatrix @C1.7pc @R0.9pc {
    0\ar[r]& R({-}b_{1})\oplus R({-}b_{2})\ar[r]^-{\varphi_{2}}\ar[d]&
    R({-}d{-}1)^{3}\ar[r]^-{\varphi_{1}}\ar[d]& R\ar[r]^-{\varphi_{0}}&M,\\
    &Syz(J)\ar[ru]& J\ar[ur] }
\end{equation}
where $\varphi_{0}$ is the quotient map,
$\varphi_{1}=\begin{pmatrix} F_{1} & F_{2} & F_{3}\end{pmatrix}$, and
$\varphi_{2}=(m_{ij})$ is a $3{\times }2$-matrix whose non-zero
entries are homogeneous polynomials of degree $b_{j}{-}d{-}1$, and the
syzygy module $Syz(J)$, which is the kernel of $\varphi_{1}$
consisting of all algebraic relations between the polynomials
$F_{1},F_{2},F_{3}$. Moreover, we can assume that
$b_{1}\geq b_{2}\geq d+1$ (\cite[Proposition 1.9]{E05}). Thus, we
have the following result.

\begin{thm}\label{Rubi}
  There is a column $\Lambda$ in the presentation $\varphi_{2}$ on the
  resolution (\ref{resolucionM}) whose entries generate a
  $2$-dimensional or $3$-dimensional $\C$-vector space $W$ in $R_{1}$
  if, and only if $b_{2}=d+2$. In particular, the polynomials
  $F_{1},F_{2},F_{3}$ satisfy the Euler's condition if, and only if,
  the vector space $W$ is $3$-dimensional.
\end{thm}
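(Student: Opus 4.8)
The plan is to extract everything from the graded shifts $b_1,b_2$ together with the Hilbert--Burch structure of the resolution~(\ref{resolucionM}). First I would record the single numerical identity that drives the whole argument. Since, by the Hilbert--Burch theorem, each generator $F_i$ is (up to sign) the maximal minor of $\varphi_2$ obtained by deleting its $i$-th row, and the entries of $\varphi_2$ in the column attached to $R(-b_j)$ are homogeneous of degree $b_j-d-1$, comparing degrees inside the determinant gives
\[
\deg F_i=(b_1-d-1)+(b_2-d-1)=d+1,\qquad\text{hence}\qquad b_1+b_2=3(d+1).
\]
Minimality of~(\ref{resolucionM}) forces every entry of $\varphi_2$ to lie in $\langle x,y,z\rangle$, so each $b_j-d-1\ge 1$, i.e. $b_2\ge d+2$. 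Combined with $b_1+b_2=3(d+1)$ and $b_1\ge b_2$, a short case check shows that for $d\ge 2$ the entries of the first column have degree $b_1-d-1\ge 2$, while the entries of the second column are linear (degree $1$) exactly when $b_2=d+2$; in that case $b_1=2d+1>d+2$, so the two columns are genuinely distinct.

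This already yields the forward implication of the first equivalence: if some column $\Lambda$ has entries generating $W\subseteq R_1$ with $2\le\dim W\le 3$, then those entries are linear forms, so that column has entry-degree $1$, forcing $b_j=d+2$; by the previous paragraph this can only be the second column, hence $b_2=d+2$. For the converse I assume $b_2=d+2$, so the three entries $m_{12},m_{22},m_{32}$ of the second column $\Lambda$ are linear forms and $W=\langle m_{12},m_{22},m_{32}\rangle\subseteq R_1$ has $\dim W\le 3$. I would then rule out the two degenerate cases. The case $\dim W=0$ is impossible, since a zero column would contradict the injectivity of $\varphi_2$ in~(\ref{resolucionM}). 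The case $\dim W=1$ is impossible because if all three entries were scalar multiples of a single linear form $\ell$, the Hilbert--Burch formula would exhibit $\ell$ as a common factor of $F_1,F_2,F_3$, contradicting the no-common-factor hypothesis. Therefore $\dim W\in\{2,3\}$.

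For the ``in particular'' statement I would exploit that, when $b_2=d+2$, the graded piece of $Syz(J)=\ker\varphi_1$ in degree $d+2$ is one-dimensional and spanned by $\Lambda$: writing an arbitrary degree-$(d+2)$ syzygy as $p_1(\mathrm{col}_1)+p_2(\mathrm{col}_2)$, homogeneity forces $\deg p_1=d+2-b_1=1-d<0$ and $\deg p_2=0$, so only constant multiples of $\Lambda$ survive. Now Euler's condition $xF_1+yF_2+zF_3=0$ says precisely that $(x,y,z)^{\mathsf T}$ is a syzygy of the $F_i$ lying in degree $d+2$; hence it must be a scalar multiple of $\Lambda$, which happens if and only if the entries of $\Lambda$ are proportional to $x,y,z$, i.e. span all of $R_1$, that is $\dim W=3$. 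Conversely, if $\dim W=3$ the three linear entries of $\Lambda$ form a basis of $R_1$, and the linear change of generators sending this basis to $(x,y,z)$ converts the syzygy relation $m_{12}F_1+m_{22}F_2+m_{32}F_3=0$ into Euler's condition for the transformed generators.

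The main obstacle I anticipate is this last equivalence: Euler's condition is a statement about an \emph{ordered} triple of generators, so the cleanest formulation identifies the existence of a degree-$(d+2)$ syzygy with full-rank linear entries (equivalently $\dim W=3$) with the possibility of choosing $F_1,F_2,F_3$ so that $\Lambda$ is literally the Euler vector $(x,y,z)^{\mathsf T}$. Tracking the fact that $W$ is exactly $1$-dimensional precisely when the $F_i$ share a linear factor --- which is excluded --- is what makes the dichotomy $\{2,3\}$ clean, and it is the step most dependent on the no-common-factor hypothesis.
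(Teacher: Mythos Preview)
Your proof is correct and follows essentially the same structure as the paper's: linear entries in a column of $\varphi_2$ correspond to $b_2=d+2$, one then excludes $\dim W\le 1$, and Euler's condition is read as a degree-$(d{+}2)$ syzygy. The differences are minor but worth recording. To exclude $\dim W=1$ you use the Hilbert--Burch minors to produce a common linear factor of the $F_i$, whereas the paper factors the single linear form out of the syzygy relation and contradicts the linear independence of $F_1,F_2,F_3$; both arguments are valid. You also make explicit the identity $b_1+b_2=3(d+1)$ and the consequence that $Syz(J)_{d+2}$ is one-dimensional, which pins $\Lambda$ down up to scalar and makes the Euler equivalence clean; the paper leaves this implicit and its converse (``if $\dim W=3$'') is stated more tersely. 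Finally, your closing remark about Euler's condition being a statement about an ordered triple is on point: the paper's Algorithm~\ref{algoritmo2} (Steps~\ref{deflis}--\ref{trans}) confirms your reading that a linear change of generators is the intended interpretation.
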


\begin{proof}
  We suppose that $W=\langle \ell_{1},\ell_{2},\ell_{3}\rangle_{\C}$
  is a $2$-dimensional or $3$-dimensional $\C$-vector subspace of
  $R_{1}$. Let $\ell_{i}$ be a generator of $W$. Since that
  $M_{\bullet}$ is a zero-degree resolution then, without loss of
  generality, $deg(\ell_{i})=b_{2}-d-1$. Since $W\subset R_{1}$ then
  $deg(\ell_{i})=1$, thus, $b_{2}=d+2$.  Conversely, if $b_{2}=d+2$;
  then, there exists a column $\Lambda$ in the presentation
  $\varphi_{2}$ with linear entries. Let $W$ be the space generate for
  these entries. If the $\dim_{\C} W=1$, then
  $\Lambda=\begin{pmatrix} \lambda_{1}m & \lambda_{2}m &
    \lambda_{3}m\end{pmatrix}^{\top}$, where $m$ is the generator of
  $W$ and $\lambda_{i}\in\C$ for $i=1,2,3$.  Since that
  $\varphi_{1}\cdot\varphi_{2}=0$, then
  $\sum_{i=1}^{3}\lambda_{i}F_{i}=0$. Since, $F_{1},F_{2}$ and $F_{3}$
  are linearly independent polynomials, thus $\lambda_{i}=0$ for all
  $i$ and this is a contradiction. Then $\dim_{\C}W=2$ or $3$.

  For the second part, we suppose that $xF_{1}+yF_{2}+zF_{3}=0$, then
  $\begin{pmatrix}x & y & z\end{pmatrix}^{\top}\in Syz(J)$. We can
  consider the matrix $\begin{pmatrix}x & y & z\end{pmatrix}^{\top}$
  like a column of the presentation $\varphi_{2}$ and the
  $\C$-vectorial space $W$ generate by these entries is
  $3$-dimensional. On the other hand, if $dim_{\C}(W)=3$, we can see
  this space as a $R$-module, and we can define the $R$-homomorphism
  $\phi$ between the module $W$ and the module $R$ that sends
  generators to generators.
\end{proof}

Condition $b_{2}=d+2$ of the previous Theorem is equivalent to having
that the Betti number $\beta_{2,d+2}\neq 0$.

\subsection{Description and proof of correctness of the Euler-Betti
  Algorithm}

With the results above, we present the algorithm \ref{algoritmo2}. It
allows us to verify the existence of three polynomials of the correct
degree that satisfy Euler's condition in a given ideal; and,
consequently we obtain a foliation associated with this ideal.  In the
proof, we assume that there are three polynomials of the correct
degree, because this is the first condition verified in the ideal
given.

\begin{breakablealgorithm}
\caption{{\bf: The Euler-Betti Algorithm}}\label{algoritmo2}
\begin{algorithmic}[1]
  \vspace{0.2cm}
  \REQUIRE{An ideal
    $J\in\H^{d^2+d+1}(\P^{2})$.}
  \vspace{0.3cm}
  \ENSURE{Determine if there is a foliation of degree $d$ such that
    $J$ is its singular subscheme. In an affirmative case, give a
    representation of the foliation.} \vspace{0.5cm}

  \STATE{Compute the Betti numbers of the module
    $M=\C[x,y,z]/ J$.}  \vspace{0.3cm}

  \STATE{\textbf{If} the Betti numbers $\beta_{1,d+1}=3$,
    $\beta_{1,s}=0$ for all $s<d+1$ and $\beta_{2,d+2}\neq 0$,
    \textbf{then}:}\label{Al2} \vspace{0.3cm}

  \STATE{Choose an element $\ell\in Syz(J)$ with entries of
    degree $1$.}  \vspace{0.3cm}\label{Al3}

  \STATE{If the dimension of the $\C$-vector space generated by the
    entries of $\ell$ is $3$, then:}\vspace{0.3cm}

  \STATE{Calculate the matrix $D$ given by the linear transformation
    that sends $x,y,z$ to the entries of $\ell$.}\label{deflis}
  \vspace{0.3cm}

  \STATE{ Calculate the matrix
              \[ T:=\begin{bmatrix} F_{1} & F_{2} & F_{3}
                \end{bmatrix}\cdot D^{-1}\]
              where $F_{i}$ are the elements of degree $d+1$ in
              $J$.}\label{trans} \vspace{0.3cm}

            \STATE{ Define $A=T_{1}$, $B=T_{2}$ and $C=T_{3}$, where
              $T_{i}$ are the columns of the matrix
              $T$.}\vspace{0.3cm} \STATE{ Print
              $(A,B,C)$} \vspace{0.3cm}

            \STATE{ \textbf{Else } Print \textquotedblleft No
              foliation".}
\end{algorithmic}
\end{breakablealgorithm}

\begin{proof}[ Proof of correctness of the algorithm \ref{algoritmo2}]
  We assume that $\beta_{1,d+1}=3$ and $\beta_{1,s}=0$ for all
  $s<d+1$, then the Betti numbers $\beta_{2,i+2}=0$ for all
  $i\leq d-1$ (\cite[Proposition 1.9]{E05}). This means, that there exist
  three independent forms of degree $d+1$, and there are not forms of
  less degree in $J$.

  Since $\beta_{2,d+2}\neq 0$, then there exist $\Lambda\in Syz(J)$
  with entries of degree $1$. If the dimension of the space generated
  by the entries of $\Lambda$ is three, by the Theorem \ref{Rubi} and
  by the lineal transformation of the step \ref{trans} and the step
  \ref{deflis}, in the ideal $J$ there exist three forms $(A,B,C)$ of
  degree $d+1$ satisfying Euler's condition.
\end{proof}

\section{Conclusions }\label{Result}
%
\subsection{Applications Euler-Betti Algorithm}

Algorithm \ref{algoritmo2} is a tool that allows us to obtain
foliation quickly from an ideal in $\C[x,y,z]$ (or equivalently
$\C[y,z]$). However, when the generators of the ideal depend on other
parameters, such as their coefficients, this problem is more
complicated to compute computationally because of the non-polynomial
complexity of the algorithm. For small degrees, it is possible to
determine the conditions of these generators as presented in the
following examples and whose calculations were carried out in the
software \cite{M2}.

\begin{itemize}
\item Let $I=\langle y^{2}-z^{3},yz^{2}\rangle \in \H^{7}(\C^{2})$. By the
  Algorithm \ref{algoritmo2} we have that $I$ corresponds to a
  foliation of degree $2$ defined by the $1$-form $Adx+Bdy+Cdz$ where
\begin{align*}
  A=& -y^{3}\\
  B=& xy^{2}-z^{3}\\
  C=& yz^{2}.
\end{align*}

\item The ideal
  $I=\langle y^{3}+14yz^{2}-8z^{3}+8y^{2}-z-14,
  -9y^{3}+11yz^{2}+3z^{3}-3y^{2}+9z-11,
  7y^{3}+yz^{2}-5z^{3}+5y^{2}-7z-1)\rangle \in\H^{7}(\C^{2})$ correspond to
  a foliation $Adx+Bdy+Cdz$ of degree $2$ too where
\begin{align*}
  A=& -y^{3}+x^{2}z\\
  B=& xy^{2}-z^{3}\\
  C=& -x^{3}+yz^{2}.
\end{align*}

\item The ideal
  $I=\langle -6y^{2}z^{3}+y^{4}+2y^{3}+3y^{2},
  4z^{4}+5z^{2}-6y^{3}z^{2}\rangle \in\H^{21}(\C^{2})$ correspond to a
  degree $4$ foliation $Adx+Bdy+Cdz$ where
  \begin{align*}
    A=&y^{5}+2xy^{4}+3x^{2}y^{3}-4z^{5}-5x^{2}z^{3}\\
    B=&6y^{2}z^{3}-xy^{4}-2x^{2}y^{3}-3x^{2}y^{2}\\
    C=&4xz^{4}+5x^{3}z^{2}-6y^{3}z^{2}.
    \end{align*}

\end{itemize}

In $2017$, Alc\'antara in \cite{A18} gave a family of foliations of
degree $d$ with a single singular point when $d$ is odd, and the local
representation of this family of foliations is given by the ideal of
the form
\begin{equation}\label{FA}
  I=\langle
  y+az^{\frac{d+1}{2}}+bz^{\frac{d^{2}+d+2}{2}}+cz^{d^{2}+\frac{d+3}{2}},
  z^{d^{2}+d+1}\rangle \in\H^{d^{2}+d+1}(\C^{2})
\end{equation}

whose coefficients of the polynomial
$y+az^{\frac{d+1}{2}}+bz^{\frac{d^{2}+d+2}{2}}+cz^{d^{2}+\frac{d+3}{2}}$
satisfy the equation $db^{2}-ac=0$ and $abc\neq 0$.

Later, in $2019$, Alc\'antara and the first author in \cite{AP19}
give a new example of a family of foliations for degree $3$, and whose
local representation has the form
\begin{equation}\label{FP}
   I=\langle y+az^{2}+bz^{5}+cz^{8},z^{13}\rangle \in\H^{13}(\C^{2})
\end{equation}
where the coefficients of $y+az^{2}+bz^{5}+cz^{8}$ satisfy the
equation $lb^{2}-ac=0$ with $l=-3,1$ and $abc\neq 0$.

For $d=3$, the two families of foliations (\ref{FA}) and (\ref{FP})
have an unique singular point. The singular point of de foliation
(\ref{FA}) is a nilpotent point and the singular point of the
foliation (\ref{FP}) is a saddle-node. The two families were also
obtained with Algorithm \ref{algoritmo2}, and both of them are
particular cases of an ideal of the form

 \begin{equation}\label{ideal1}
   I=\langle y+\sum_{i=2}^{N-1}a_{i}z^{i}, z^{N}\rangle \in \H^{N}(\C^{2})
\end{equation}

where $a_{i}\in \C$ and $N=d^{2}+d+1$ for some $d\geq 2$. Then, can we
obtain new foliations with local representation of the form $I$ as in
(\ref{ideal1}), whose coefficients satisfy a polynomial equation? What
about even degree?

With a symbolic experiment and a version of the Algorithm \ref{algoritmo2} adapted for the ideal (\ref{ideal1}), in addition to
foliations mentioned above, the following foliation was obtained. For
example, for $d=3$, if the coefficients of the polynomial
$ y+az^{2}+bz^{5}+cz^{11}$ satisfy the equation $3b^{3}+a^{2}c=0$ with
$abc\neq 0$, then the ideal given by
  \[ I=\langle y+az^{2}+bz^{5}+cz^{11}, z^{13}\rangle \in \H^{13}(\C^{2})\]
  corresponds to the foliation $Adx+Bdy+Cdz$ where

  \begin{align*}
    A=& \frac{-3a}{b}xy^{3}+\frac{9a^{2}b}{2c}x^{2}yz+\frac{27b^{2}}{2c}y^{2}z^{2}+\frac{9a^{3}b}{2c}xz^{3},\\
    B=& \frac{3a}{b}x^{2}y^{2}+\frac{1}{a}y^{3}z+\frac{5a^{2}}{b}xyz^{2}-\frac{6ab^{2}}{c}z^{4},\\
    C=& \frac{-9a^{2}b}{2c}x^{3}y-\frac{1}{a}y^{4}+\frac{3b^{2}}{2c}xy^{2}z-\frac{9a^{3}b}{2c}x^{2}z^{2}+\frac{6ab^{2}}{c}yz^{3}
  \end{align*}

  which singular point is a nilpotent point.


\section*{Acknowledgments}

Work of Mart\'in del Campo was supported by CONACyT under grant
A1-S-30035 and it is part of the reasearch project Cátedra-1076.
Work of Pantale\'on-Modrag\'on was supported
by Conacyt under \textquotedblleft Estancias Posdoctorales por
M\'exico'' [2020-2021].

The authors would like to thank Diego Rodriguez Guzman and
Claudia Alcántara for helpful discussions on foliations and their
equations.  Pantale\'on-Mondragon would like to thank  Rafael
Ibarra for his help with grammar and spelling checks that improved
this manuscript.

\bibliographystyle{alpha} 
\bibliography{BettiEuler-Petra}

\begin{thebibliography}{CDGBM10}

\bibitem[ACG11]{ACG11}
Enrico Arbarello, Maurizio Cornalba, and Phillip~A. Griffiths.
\newblock {\em Geometry of algebraic curves. {V}olume {II}}, volume 268 of {\em
  Grundlehren der Mathematischen Wissenschaften [Fundamental Principles of
  Mathematical Sciences]}.
\newblock Springer, Heidelberg, 2011.
\newblock With a contribution by Joseph Daniel Harris.

\bibitem[Alc18]{A18}
Claudia~R. Alc\'{a}ntara.
\newblock Foliations on $cp^2$ of degree {$d$} with a singular point with
  {M}ilnor number {$d^2+d+1$}.
\newblock {\em Rev. Mat. Complut.}, 31(1):187--199, 2018.

\bibitem[APM20]{AP19}
Claudia~R. Alc\'{a}ntara and Rub\'{\i} Pantale\'{o}n-Mondrag\'{o}n.
\newblock Foliations on with a unique singular point without invariant
  algebraic curves.
\newblock {\em Geom. Dedicata}, 207:193--200, 2020.

\bibitem[Bru15]{B15}
Marco Brunella.
\newblock {\em Birational geometry of foliations}, volume~1 of {\em IMPA
  Monographs}.
\newblock Springer, Cham, 2015.

\bibitem[CDGBM10]{CDGM10}
D.~Cerveau, J.~D\'{e}serti, D.~Garba~Belko, and R.~Meziani.
\newblock G\'{e}om\'{e}trie classique de certains feuilletages de degr\'{e}
  deux.
\newblock {\em Bull. Braz. Math. Soc. (N.S.)}, 41(2):161--198, 2010.

\bibitem[CLNS88]{CLN88}
C.~Camacho, A.~Lins~Neto, and P.~Sad.
\newblock Minimal sets of foliations on complex projective spaces.
\newblock {\em Inst. Hautes \'{E}tudes Sci. Publ. Math.}, (68):187--203 (1989),
  1988.

\bibitem[CO99]{CO99}
Antonio Campillo and Jorge Olivares.
\newblock A plane foliation of degree different from 1 is determined by its
  singular scheme.
\newblock {\em C. R. Acad. Sci. Paris S\'{e}r. I Math.}, 328(10):877--882,
  1999.

\bibitem[CO01]{CO01}
Antonio Campillo and Jorge Olivares.
\newblock Polarity with respect to a foliation and {C}ayley-{B}acharach
  theorems.
\newblock {\em J. Reine Angew. Math.}, 534:95--118, 2001.

\bibitem[Eis05]{E05}
David Eisenbud.
\newblock {\em The geometry of syzygies}, volume 229 of {\em Graduate Texts in
  Mathematics}.
\newblock Springer-Verlag, New York, 2005.
\newblock A second course in commutative algebra and algebraic geometry.

\bibitem[GMK89]{GMK89}
Xavier G\'{o}mez-Mont and George Kempf.
\newblock Stability of meromorphic vector fields in projective spaces.
\newblock {\em Comment. Math. Helv.}, 64(3):462--473, 1989.

\bibitem[GMOB89]{GMOB89}
X.~G\'{o}mez-Mont and L.~Ort\'{\i}z-Bobadilla.
\newblock {\em Sistemas din\'{a}micos holomorfos en superficies}, volume~3 of
  {\em Aportaciones Matem\'{a}ticas: Notas de Investigaci\'{o}n [Mathematical
  Contributions: Research Notes]}.
\newblock Sociedad Matem\'{a}tica Mexicana, M\'{e}xico, 1989.

\bibitem[GS]{M2}
D.R. Grayson and M.E. Stillman.
\newblock Macaulay2, a software system for research in algebraic geometry.
\newblock Available at {\tt http://www.math.uiuc.edu/Macaulay2/}.

\bibitem[Jou79]{J06}
J.~P. Jouanolou.
\newblock {\em \'{E}quations de {P}faff alg\'{e}briques}, volume 708 of {\em
  Lecture Notes in Mathematics}.
\newblock Springer, Berlin, 1979.

\bibitem[MS05]{MS04}
Ezra Miller and Bernd Sturmfels.
\newblock {\em Combinatorial commutative algebra}, volume 227 of {\em Graduate
  Texts in Mathematics}.
\newblock Springer-Verlag, New York, 2005.

\end{thebibliography}
\end{document}